\theoremstyle{plain}
\newtheorem{theorem}{Theorem}[section]
\newtheorem{lemma}[theorem]{Lemma}
\newtheorem{corollary}[theorem]{Corollary}
\numberwithin{equation}{section}
\begin{document}

\title[A Theorem of Stafford]
      {On a Theorem of Stafford}

\author{Napoleon Caro${}^{(1)}$}\thanks{${}^{(1)}$ Departamento de Matem{\'a}tica,
Instituto de Ci{\^e}ncias Matem{\'a}ticas e de Computa\c{c}{\~a}o, Universidade de S{\~a}o Paulo, Caixa Postal 668, CEP
13560-970, S{\~a}o Carlos, SP, Brazil.  \\ \textit{E-mail address}: napoct@icmc.usp.br}

\author{Daniel Levcovitz${}^{(2)}$}\thanks{${}^{(2)}$ Departamento de Matem{\'a}tica,
Instituto de Ci{\^e}ncias Matem{\'a}ticas e de Computa\c{c}{\~a}o, Universidade de S{\~a}o Paulo, Caixa Postal 668, CEP
13560-970, S{\~a}o Carlos, SP, Brazil.  \\ \textit{E-mail address}: lev@icmc.usp.br}


\maketitle

 \vspace{24pt}

 \begin{abstract}
 In [6] Stafford proved that every left or right ideal of the Weyl algebra $A_n(K)=K[x_1,... , x_n]\langle\partial_1,... ,\partial_n \rangle$ ($K$ a field of characteristic zero) is  generated by two  elements. Consider the ring $D_n :=K[[x_1,..., x_n]]\langle\partial_1,...\partial_n \rangle$  of differential operators over the ring of formal power series $K[[x_1,...,x_n]]$. In this paper we prove that every left or right ideal of the ring \linebreak
 $E_n :=K((x_1,...,x_n))\langle\partial_1,...,\partial_n \rangle$ of differential operators over the field of formal  Laurent series $K((x_1,...,x_n))$ is generated by two elements. The same is true for the ring of differential operators over the convergent \\ Laurent series $\mathbb{C}\{\{x_1,...,x_n\}\}.$ This is in accordance  with the conjecture that says that in a (noncommutative) noetherian simple  ring, every left or right  ideal is generated by two elements.
   \end{abstract}

 \medskip
 \medskip

 \thispagestyle{empty}
 \section{Introduction}

 In [6] Stafford proved that every left or right ideal of the Weyl algebra $A_n(K)=K[x_1,... , x_n]\langle\partial_1,... ,\partial_n \rangle$ ($K$ a field of characteristic zero) is  generated by two  elements. It would be interesting to have a similar  result for the ring $D_n :=K[[x_1,..., x_n]]\langle\partial_1,...\partial_n \rangle$  of differential operators over the ring of formal power series $K[[x_1,...,x_n]]$.   In this paper we prove that every left or right ideal of the ring $E_n :=K((x_1,...,x_n))\langle\partial_1,...,\partial_n \rangle$ of differential operators over the field of formal Laurent series $K((x_1,...,x_n))$ is generated by two elements. The same is true for the ring of differential operators over the convergent Laurent series $\mathbb{C}\{\{x_1,...,x_n\}
 \} $ (this is the field of fractions of the domain $\mathcal{O}_n$ of germs of convergent complex power series around the origin of $\mathbb{C}^n$). Note  that this is in accordance  with the conjecture that says that in a (noncommutative) noetherian simple  ring , every left or right  ideal is generated by two elements.

 The main difficulty here is that the symmetry between the $x$'s and the $\partial$'s, present in the Weyl algebra and which is fundamental in the prove of Stafford's theorem, is broken in the ring $D_n$. Since we are working over the base ring of power series, infinite sums in the $x$'s are permitted while only finite sums in the $\partial$'s occur. To deal with this problem we appeal to Weierstrass' preparation theorem to put an element $v \in D_n$ in a appropriate form (see lemma 4.1). In general, however, we follow the proof of Stafford's theorem in Bjork's book ([1]) and we do the necessary  modifications to deal with the power series case.

 \section{Basic Notations}

Let $K$ be a field of characteristic zero and $T$ be a skew field of characteristic zero.
By $T[[x]]$ we denote the ring of power series in one indeterminate with coefficients in $T$ and by $T((x))$ its quotient skew field.
In paragraph 3 we will be interested in the ring $S=T((x))\langle\partial_x \rangle$, the ring of differential operators with coefficients in $T((x))$. This paragraph is mostly concerned with free $S$-modules of finite rank over $S$.

In paragraph 4 the following notation will be used . For each $0\leq r \leq n$  let  $D_r=K[[x_1,...,x_r]]\langle\partial_1,...,\partial_r \rangle$ be the ring of differential operators over the ring of formal power series $K[[x_1,...,x_r]]$ and let  $F_r$ be its quotient ring. $F_r$ exists because $D_r$ is an Ore domain. Since $x_{r+1},...,x_n$ commute with the elements in $F_r$ we also get the division ring $F_r((x_{r+1},...,x_n))$  which by definition is the quotient ring of $D_r[[x_{r+1},...,x_n]]$.
Also, for each $0\leq r \leq n$,  we put $R_r=F_r((x_{r+1},...,x_n))\langle\partial_{r+1},...,\partial_n \rangle$. Of course if $r=n$, then $R_n=F_n$.

\section{The ring $S=T((x))\langle\partial_x \rangle$}
Let us explore some properties of $S=T((x))\langle\partial_x \rangle$, the ring of linear differential operators with coefficients in rational expressions in $x$ over the skew field $T$.
Recall that $S$ is a noncommutative noetherian simple ring.

 Suppose that $F$ is a field contained in $T$. By $F[[x]]\langle\partial_x \rangle$ we denote the ring of differential operators with coefficients in the ring of power series $F[[x]]$.

\begin{lemma}
Let $0\neq \alpha\in S$.Then the $S$-module $S/S\alpha $ has  finite length.
\end{lemma}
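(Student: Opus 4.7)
\emph{Proof plan.} The plan is to induct on the order $n=\mathrm{ord}(\alpha)$ as a differential operator in $\partial_x$, exploiting the fact that $S=T((x))[\partial_x;\delta]$ (with $\delta=d/dx$) is a left and right principal ideal domain.

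First I would verify the PID property. Because $T((x))$ is a skew field, the leading coefficient of any nonzero element of $S$ is invertible, so there is a left Euclidean division: given $\beta\in S$, one can find $q\in S$ with $\mathrm{ord}(\beta-q\alpha)<\mathrm{ord}(\alpha)$. Iterating produces, for any left ideal, a generator of minimal order, and the symmetric argument handles right ideals. Along the way I would record that order is additive on products (leading coefficients multiply in $T((x))$, which is a domain), so the units of $S$ are precisely the nonzero elements of $T((x))$.

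The induction then runs as follows. If $n=0$, then $\alpha$ is a unit and $S/S\alpha=0$ has length $0$. If $n>0$ and $S\alpha$ happens to be maximal, then $S/S\alpha$ is simple and has length $1$. Otherwise, by the PID property there is a strict chain $S\alpha\subsetneq S\beta\subsetneq S$; writing $\alpha=\gamma\beta$, neither $\beta$ nor $\gamma$ is a unit, so by additivity of order both have order strictly less than $n$. Consider the short exact sequence
\[
0\to S\beta/S\alpha\to S/S\alpha\to S/S\beta\to 0.
\]
Right multiplication by $\beta$ yields a surjection $S\to S\beta/S\alpha$; since $S$ is a domain, $s\beta\in S\gamma\beta$ forces $s\in S\gamma$, so the kernel is $S\gamma$ and $S\beta/S\alpha\cong S/S\gamma$. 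By the inductive hypothesis, $S/S\beta$ and $S/S\gamma$ have finite length, hence so does $S/S\alpha$.

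The only delicate points will be setting up the left/right conventions in the Euclidean algorithm and the isomorphism $S\beta/S\gamma\beta\cong S/S\gamma$; once those are in place the argument is entirely formal. In particular, nothing about $T$ is used beyond its being a skew field of characteristic zero, and no appeal to Weierstrass preparation is needed at this stage.
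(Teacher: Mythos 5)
Your argument is correct and rests on the same engine the paper invokes: the Euclidean division algorithm in $S$, which makes $S$ a principal left (and right) ideal domain with order additive on products, so that proper divisibility strictly drops order and the length of $S/S\alpha$ is bounded by $\mathrm{ord}(\alpha)$. This is exactly the content of the division-algorithm proof (Bj\"{o}rk, Lemma 8.8) that the paper cites, so your route is essentially the same, just written out in full.
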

\begin{proof} It follows immediately from the division algorithm on $S$. See also [1] lemma 8.8, page 27.
\end{proof}

\begin{lemma}

Let $ \delta_1,...,\delta_m $ be a set of  non zero elements in  $F[[x]]\langle\partial_x \rangle \subset S$. Let $0\neq \alpha\in S$ and  $S^{(m)}=S\varepsilon_1 + \cdots +S\varepsilon_m$ be a free $S-module$ of rank $m$ with basis $\varepsilon_1,..., \varepsilon_m$. Let $M$ be the  $S-submodule $ of $S^{(m)}$  generated by the set $\{\alpha\delta_1f\varepsilon_1 + \dots+\alpha\delta_mf\varepsilon_m / f\in \mathbb{Z}[[x]]\langle\partial_x \rangle\}  $.Then $M=S^{(m)}$.
\end{lemma}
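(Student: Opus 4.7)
The plan is to prove this by induction on $m$, with Lemma~3.1 as the key input.

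For the base case $m=1$ I must show that the left ideal $J := \sum_f S\alpha\delta_1 f$ equals $S$. Since $J\supseteq S\alpha\delta_1$ and $\alpha\delta_1\neq 0$, $S/J$ has finite length; if $J\neq S$, some maximal left ideal $L\supseteq J$ exists and yields a simple module $V=S/L$ with nonzero $v=\bar 1$. For every $f\in\mathbb{Z}[[x]]\langle\partial_x\rangle$ we have $\alpha\delta_1\cdot(fv)=\overline{\alpha\delta_1 f}=0$ in $V$. Because $S$ is simple, the two-sided annihilator of the simple module $V$ is zero, so $\alpha\delta_1$ acts on $V$ with trivial (proper) kernel, forcing $fv=0$ for every such $f$. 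Taking $f=1$ gives $v=0$, a contradiction.

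For the inductive step, let $\pi:S^{(m)}\to S^{(m-1)}$ be projection off the last coordinate. The image $\pi(M)$ contains the $(m{-}1)$-analogue generators $\sum_{i<m}\alpha\delta_i f\varepsilon_i$, so the inductive hypothesis gives $\pi(M)=S^{(m-1)}$. Lifting, for each $i<m$ there is an element $\varepsilon_i+r_i\varepsilon_m\in M$ with $r_i\in S$, whence $\bar\varepsilon_i=-r_i\bar\varepsilon_m$ in $S^{(m)}/M$; in particular the quotient is cyclic on $\bar\varepsilon_m$. Substituting back into the defining relation, the annihilator of $\bar\varepsilon_m$ contains $\theta_f:=\alpha\delta_m f-\sum_{i<m}\alpha\delta_i f\,r_i$ for every $f$, and so it suffices to show the left ideal $\sum_f S\theta_f$ equals $S$. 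I then attempt to rerun the base-case reasoning on this ideal: if it were contained in a maximal left ideal $\tilde L$, the simple quotient $\tilde V=S/\tilde L$ with $v=\bar 1\neq 0$ would satisfy $\theta_f v=0$ for all $f$, which rewrites as $\sum_i\alpha\delta_i f w_i=0$ in $\tilde V$ with $w_m=v\neq 0$ and $w_i=-r_i v$ for $i<m$; using once more that the nonzero $\alpha$ acts injectively on the simple module $\tilde V$, I peel off $\alpha$ and obtain $\sum_i\delta_i f w_i=0$ for every $f\in\mathbb{Z}[[x]]\langle\partial_x\rangle$.

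The main obstacle is closing this last step: deducing a contradiction from the relation $\sum_i\delta_i f w_i=0$ in the simple module $\tilde V$ with $w_m\neq 0$. I expect to handle it by a secondary induction on $m$ internal to the simple-module setting, iterating commutators of $f$ with the $\delta_i$ — which remain inside $F[[x]]\langle\partial_x\rangle$ precisely because $\mathbb{Z}[[x]]\langle\partial_x\rangle\subset F[[x]]\langle\partial_x\rangle$ — to lower the order of the $\delta_i$ appearing in the relation, and then invoking the base-case injectivity of a single nonzero differential operator on $\tilde V$ to reach a contradiction. This is the technically delicate step and is done in close analogy with the corresponding computation in Bj\"ork's book; the restriction that $f$ range over the small subring $\mathbb{Z}[[x]]\langle\partial_x\rangle$ rather than over all of $S$ is exactly what keeps these commutators inside $F[[x]]\langle\partial_x\rangle$ and makes the reduction work.
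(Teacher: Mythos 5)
There is a genuine gap, in fact two. First, your key reduction step rests on the claim that, because the simple module $V=S/L$ is faithful (as $S$ is a simple ring), the nonzero element $\alpha\delta_1$ (and later $\alpha$) ``acts on $V$ with trivial kernel.'' This is false: the kernel of left multiplication by a ring element on a module is only an additive subgroup, not an $S$-submodule, so simplicity of $V$ says nothing about it, and faithfulness only rules out elements killing \emph{all} of $V$. Concretely, $S/S\partial_x\cong T((x))$ is a simple faithful $S$-module on which $\partial_x$ acts as $d/dx$ and annihilates the nonzero element $\bar 1$. So you cannot conclude $fv=0$ from $\alpha\delta_1(fv)=0$ in the base case, nor ``peel off $\alpha$'' in the inductive step; both reductions collapse.

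Second, even granting the reduction, the statement you defer --- deriving a contradiction from $\sum_i\delta_i f w_i=0$ for all $f\in\mathbb{Z}[[x]]\langle\partial_x\rangle$ with $w_m\neq0$ --- is not a technical detail but the entire content of the lemma, and your sketch (``iterate commutators to lower the order of the $\delta_i$'') does not describe a working procedure. The paper's proof works directly in $M$ with no simple modules at all: it first normalizes the $\delta_i$ by $F$-linear transformations so that the leading $\partial^{\omega}$-coefficients $p_i(x)$ have strictly increasing valuations; it then uses that $M$ is stable under iterated commutators with $x$ (because $fx$ stays in $\mathbb{Z}[[x]]\langle\partial_x\rangle$) to extract the element $p_1(x)\varepsilon_1+\cdots+p_l(x)\varepsilon_l\in M$, and under iterated commutators with $\partial_x$ to differentiate these power series until one coefficient becomes a unit of $F[[x]]$, eliminating the coordinates one at a time and eventually landing $\varepsilon_1$ in $M$; only then does it induct on $m$. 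The valuation normalization and the interplay between the two kinds of commutators (with $x$ to isolate leading coefficients, with $\partial_x$ to turn power series into units) is exactly what is missing from your outline, and without it the argument does not close.
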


\begin{proof}
To simplify the notation we put $\partial:= \partial_x$

Let us first observe that both the assumption and the conclusion are \linebreak  unchanged if the $m-$tuple $ \delta_1,...,\delta_m $ is replaced by an $m-$tuple $ \beta_1,...,\beta_m,
 $ where $\beta_i=\Sigma a_{ij}\delta_j$ and $(a_{ij})$ is an $m\times m$ invertible matrix with $a_{ij}\in F$. Of course, while we replace $ \delta_1,...,\delta_m $ by $\beta_1,...,\beta_m $ under a $F$-linear transformation $(a_{ij})$ we also replace the free generators $\varepsilon_1,...,\varepsilon_m$ of $S^{(m)}$ by $\zeta_1,...,\zeta_m$ where $\zeta_i=\Sigma b_{ij}\varepsilon_j ,$  $(b_{ij})=(a_{ij})^{-1}$.

Let $ord(\delta_i)$ be the $\partial-$order of $\delta_i$. We can assume that these $\partial$-orders decrease, ie, $ord(\delta_1)\geqq ord(\delta_2)\geqq \cdots \geqq ord(\delta_m)$. Hence there exists an integer $\omega$ and some $1\leq l\leq m$ such that $\omega=ord(\delta_1)= \cdots =ord(\delta_l)$, while \linebreak  $ord(\delta_i)< \omega$ if $i>l$.

If $1\leq i\leq l$ we can write $\delta_i=r_i + p_i(x)\partial ^\omega$, where $ord(r_i)<\omega$ and $p_i(x)\in F[[x]]$.
We can assume that $val(p_1)\leq val(p_2)\leq ...\leq val(p_l)$, where $val(p_i)$ is the usual valuation of the power series $p_i$. If $val(p_1)=val(p_2)=\mu$, then there exists some $t\in F$ such that $val(p_2-tp_1)> \mu. $ Replace $\delta_2$ by $\delta_2-t\delta_1 $ while $\delta_1,\delta_3,...,\delta_m$ are unchanged.
Then, after some $F-$linear transformations  we can assume that  $val(p_1)< val(p_2) < \cdots < val(p_l)$. With these normalizations in hand we begin to prove that
$\varepsilon_1 \in M$.

Let $k=ord(\alpha)$. Since $S=T((x))\langle\partial \rangle$ we can assume that $\alpha=\alpha_0 + \partial^k$ where $ord(\alpha_0)< k$.
If $1\leq i\leq l$ we have $\alpha\delta_i=p_i(x)\partial^{k+\omega} + \psi_i$ where $ord(\psi_i)<k+ \omega$ and if $l< i\leq m$,  $ord(\alpha\delta_i)< k+\omega $.

Now if $g\in S$ we put $g_1=[g,x]=gx-xg$ the commutator of $g$ and $x.$ Inductively, let  $g_{\nu+1}=[g_\nu, x]$. The element  $g_{\nu}$ is called the \textit{$\nu-$fold commutator of $g$ and $x$.} The $\nu-$ fold commutator of $\partial^\nu$ and $x$ is $\nu!$ for all positive integers $\nu$, while the $\nu-$ fold commutator of $\partial^s$ and $x$ is zero if $s< \nu$.

If we apply this to the elements $\alpha\delta_1,...,\alpha\delta_m$ we see that the $(k+\omega)-$fold commutator of $\alpha\delta_i$ and $x$ is $p_i(x)(k+\omega)!$ for all $1\leq i\leq l$ ,while they are zero if $l< i\leq m$.

The definition of $M$ implies that $M$ is stable under the $\nu-$ fold commutator with $x$, i.e., if $m\in M$ then the $\nu$-fold commutator of $m$ and $x$ is in $M$ for all positive integers $\nu$. In fact, note that if $f \in \mathbb{Z}[[x]]\langle \partial \rangle,$ then $[ \alpha \delta f, x]= \alpha \delta (fx)- x(\alpha \delta f).$

For $f=1\in \mathbb{Z}[[x]]\langle\partial \rangle\ $ we have that $a=\alpha\delta_1\varepsilon_1 + \cdots + \alpha\delta_m\varepsilon_m\in M$. If $v_1$ is the $(k+\omega)-$fold commutator of $a$ and $x$ divided by $(k+ \omega)!,$ then $v_1\in M$ and  $$v_1=p_1(x)\varepsilon_1+ \cdots+p_l(x)\varepsilon_l.$$

Now, if $h \in S,$ let $h_1=[h, \partial]= h\partial-\partial h$ be the commutator of $h$ with $\partial$. Inductively,  we define $h_{\nu +1}$ by $h_{\nu +1}=[h_\nu,\partial]$. The element  $h_\nu$ is called the \textit{$\nu -$fold commutator of $h$ and $\partial$}.
We observe that $M$ is stable under the \linebreak $\nu -$ fold commutator with $\partial$ for all positive integers $\nu$.  In fact, note that if $f \in \mathbb{Z}[[x]]\langle \partial \rangle,$ then $[\alpha \delta f, \partial ]=\alpha \delta (f \partial) - \partial(\alpha \delta f) .$

Since $v_1\in M$ we have that if $v_1^{(\mu)}$ is the $\mu -$ fold commutator of $v_1$ and $\partial$, where $\mu=val(p_1(x)) < \cdots < val(p_l(x))$, then $v_1^{(\mu)}$ is in $M$ and  $$v_1^{(\mu)}=p_1^{(\mu)}(x)\varepsilon_1 + p_2^{(\mu)} (x)\varepsilon_2 + \cdots +p_l^{(\mu)}(x)\varepsilon_l\in M,$$ where $p^{(\mu)}(x)$ denotes the usual $\mu$-derivative of a power series $p(x).$ Note that $u(x):=p_1^{(\mu)}(x)$ is a unit in $F[[x]]$ and $val(p_j^{(\mu)}(x))= val(p_j(x)) - \mu > 0,$ \linebreak $ j=2, \cdots, l.$

Define $v_2:= v_1 - p_1(x)(u(x))^{-1}v_1^{(\mu)}.$ Then $v_2 \in M $ and $$v_2= q_2(x)\varepsilon_2 + \cdots +q_l(x)\varepsilon_l,$$ where
$q_j(x)=p_j(x) - p_1(x)(u(x))^{-1}p_j^{(\mu)}(x), j=2, ..., l.$ A simple  calculation shows that $q_j(x)$ is non-zero with  $val(q_j(x))= val(p_j(x)),$ for all $j=2, ...,l$. Therefore  $val(q_2)< val(q_3)< \cdots < val(q_l).$

We now, repeat the previous argument using the commutator of $v_2$ and $\partial$ until we get $v_3= r_3(x)\varepsilon_3 + \cdots + r_l(x)\varepsilon_l \in M.$ Proceeding in this way we finally get $v_l(x)=\widetilde{u}(x)\varepsilon_l \in M,$ where $\widetilde{u}(x)$ is a unit in $F[[x]].$ Therefore $\varepsilon_l \in M.$

If $l=1$, we are done. If $l> 1$, since $v_{l-1} \in M$, we have that $\varepsilon_{l-1} \in M.$ Going backwards we get $\varepsilon_1 \in M$.

Restricting the attention to the $(m-1)$-tuple $\delta_2,...,\delta_m$ and the $S-module$ $S^{(m-1)}=S\varepsilon_2 + \cdots +S\varepsilon_m,$ the lemma follows by induction over $m$.
\end{proof}

\begin{lemma}
Let $ \delta_1,...,\delta_m $ be a set of non zero elements  in  $ F[[x]]\langle\partial_x \rangle \subset S$ and let $M$ be a $S-submodule $ of $S^{(m)}$ such that the $S-module$ $S^{(m)}/M $ has finite length. If $0\neq \alpha\in S$,  there exists some  $f\in \mathbb{Z}[[x]]\langle\partial_x \rangle$ such that $$S^{(m)}=M + S(\alpha\delta_1f\varepsilon_1 + \cdots +\alpha\delta_mf\varepsilon_m ).$$
\end{lemma}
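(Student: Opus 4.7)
The plan is to proceed by induction on the length $\ell$ of the $S$-module $S^{(m)}/M$. The case $\ell=0$ is trivial (take $f=0$), and for $\ell=1$ the submodule $M$ is maximal in $S^{(m)}$, so Lemma 3.2 (which gives $\sum_{f}Sa_{f}=S^{(m)}\not\subseteq M$) immediately supplies some $f_{0}$ with $a_{f_{0}}\notin M$, whence $M+Sa_{f_{0}}=S^{(m)}$ by maximality.

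For the inductive step with $\ell\ge 2$, I would pick a submodule $M\subsetneq M^{\ast}\subsetneq S^{(m)}$ with $M^{\ast}/M$ simple, so that $\mathrm{length}(S^{(m)}/M^{\ast})=\ell-1$. The inductive hypothesis applied to $M^{\ast}$ (with the same data $\alpha,\delta_{1},\dots,\delta_{m}$) yields $f_{0}\in\mathbb{Z}[[x]]\langle\partial_{x}\rangle$ with $S^{(m)}=M^{\ast}+Sa_{f_{0}}$. I then search for $f$ of the form $f=f_{0}+g$, with $g$ chosen from the additive subgroup $G:=\{g\in\mathbb{Z}[[x]]\langle\partial_{x}\rangle : a_{g}\in M^{\ast}\}$. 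Any such $g$ preserves the equality $S^{(m)}=M^{\ast}+Sa_{f_{0}+g}$, so the remaining task is to force the cyclic submodule $Sa_{f_{0}+g}$ to meet the simple piece $M^{\ast}/M$ of $N:=S^{(m)}/M$ nontrivially.

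If $S\bar{a}_{f_{0}}$ already contains $M^{\ast}/M$ the choice $g=0$ works; otherwise, by the simplicity of $M^{\ast}/M$, the module $N$ decomposes as $N=S\bar{a}_{f_{0}}\oplus(M^{\ast}/M)$, and cyclic generation of $N$ by $\bar{a}_{f_{0}}+\bar{a}_{g}$ becomes, via a Chinese remainder argument, the comaximality condition $\mathrm{Ann}_{S}(\bar{a}_{f_{0}})+\mathrm{Ann}_{S}(\bar{a}_{g})=S$. The main obstacle is to exhibit $g\in G$ satisfying this. The plan is to rerun the iterated commutator-with-$x$-and-$\partial$ argument from the proof of Lemma 3.2, this time inside $M^{\ast}$, to show that $\{\bar{a}_{g}:g\in G\}$ already $S$-generates the simple quotient $M^{\ast}/M$; combined with the fact that $S$ is a simple ring (so the two-sided annihilator of the nonzero $S$-module $M^{\ast}/M$ vanishes), this lets one choose $g$ whose maximal left annihilator $\mathrm{Ann}_{S}(\bar{a}_{g})$ fails to contain $\mathrm{Ann}_{S}(\bar{a}_{f_{0}})$, which gives the required comaximality and closes the induction.
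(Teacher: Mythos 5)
The paper itself does not prove this lemma; it simply cites Bj\"ork [1, Lemma 8.10], so there is no line-by-line comparison to make. Your skeleton --- induction on the length of $S^{(m)}/M$, peeling off a simple layer $M^{*}/M$ and perturbing $f_{0}$ to $f_{0}+g$ with $a_{g}\in M^{*}$ --- is the right shape for the standard argument, the base cases are fine, and the reduction of the inductive step to the dichotomy ``either $S\bar{a}_{f_{0}}\supseteq M^{*}/M$, or $N=S\bar{a}_{f_{0}}\oplus(M^{*}/M)$'' is correct. But note that in the second branch the condition you actually need is: there exists $g$ with $a_{g}\in M^{*}$ and $I\bar{a}_{g}\neq 0$, where $I=\mathrm{Ann}_{S}(\bar{a}_{f_{0}})$. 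Comaximality of annihilators alone is not the right formulation, since $\bar{a}_{g}=0$ makes $\mathrm{Ann}_{S}(\bar{a}_{g})=S$ comaximal with everything while $S\bar{a}_{f_{0}+g}=S\bar{a}_{f_{0}}\neq N$.

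The genuine gap is in how you produce such a $g$, and it affects both halves of your final step. First, the set $G=\{g:a_{g}\in M^{*}\}$ is not amenable to ``rerunning the commutator argument of Lemma 3.2 inside $M^{*}$'': that argument rests on the module in question being stable under the coordinatewise commutator maps $v=\sum v_{i}\varepsilon_{i}\mapsto\sum[v_{i},x]\varepsilon_{i}$ and $\sum[v_{i},\partial]\varepsilon_{i}$, which holds for the module generated by \emph{all} the $a_{f}$ precisely because $\sum[\alpha\delta_{i}f,x]\varepsilon_{i}=a_{fx}-xa_{f}$. An arbitrary left submodule $M^{*}$ is not stable under these maps (they involve right multiplication of each coordinate by $x$ or $\partial$), so $g\in G$ does not give $gx\in G$ or $g\partial\in G$; indeed even the weaker assertion that some $a_{g}$ lies in $M^{*}\setminus M$ is left unproved. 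Second, even granting that $\{\bar{a}_{g}:g\in G\}$ generates the simple module $M^{*}/M$, simplicity of $S$ only tells you that the nonzero \emph{left} ideal $I$ does not kill all of $M^{*}/M$; that yields $w=\sum s_{g}\bar{a}_{g}$ with $Iw\neq 0$, hence some $g$ with $Is_{g}\bar{a}_{g}\neq 0$ --- but $I$ is not a right ideal, $Is_{g}\not\subseteq I$, and you cannot conclude $I\bar{a}_{g}\neq 0$ for any $g\in G$. So the decisive existence statement is asserted rather than proved; this is exactly where the cited proof must do real work. A more promising route to close it is to exploit that Lemma 3.2 holds for \emph{every} nonzero $\alpha$, so that $\sum_{f}S\,\sigma a_{f}=S^{(m)}$ for every $0\neq\sigma\in S$ (apply the lemma with $\alpha$ replaced by $\sigma\alpha$); choosing $\sigma$ inside $I$ and working with the criterion $I\bar{a}_{g}\neq 0$ is the natural way to finish, rather than the two-sided-annihilator argument.
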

\begin{proof} See [1], Lemma 8.10, page 28.
\end{proof}

\begin{corollary}
Let $0\neq \rho \in S$ and let $ \delta_1,..., \delta_m $ be a set of non zero elements  in  $F[[x]]\langle\partial_x \rangle \subset S$. Then there exists some $f\in \mathbb{Z}[[x]]\langle\partial_x \rangle$ such that $$ S^{(m)}=S^{(m)}\rho + S(\rho \delta_1 f \varepsilon_1 + \cdots + \rho \delta_m f  \varepsilon_m ) $$
\end{corollary}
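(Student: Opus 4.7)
The plan is to obtain the corollary as an immediate consequence of Lemma 3.3, applied with $\alpha = \rho$ and $M = S^{(m)}\rho$. The only thing that needs to be checked in order to invoke Lemma 3.3 is that the quotient $S^{(m)}/S^{(m)}\rho$ has finite length as a left $S$-module.

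To verify this, I would first observe the natural isomorphism of left $S$-modules
\[
S^{(m)}/S^{(m)}\rho \;\cong\; \bigoplus_{i=1}^{m} S\varepsilon_i / S\rho\varepsilon_i \;\cong\; (S/S\rho)^{(m)},
\]
since $S^{(m)}\rho = \bigoplus_{i=1}^m S\rho\varepsilon_i$ by definition of the free module structure. By Lemma 3.1 applied to $\rho \neq 0$, the module $S/S\rho$ has finite length. A finite direct sum of finite length modules has finite length, so $S^{(m)}/S^{(m)}\rho$ has finite length.

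With this established, Lemma 3.3 applied to $\alpha = \rho$, $\delta_1, \ldots, \delta_m$, and $M = S^{(m)}\rho$ produces an element $f \in \mathbb{Z}[[x]]\langle\partial_x\rangle$ such that
\[
S^{(m)} \;=\; S^{(m)}\rho \,+\, S\bigl(\rho\delta_1 f\varepsilon_1 + \cdots + \rho\delta_m f\varepsilon_m\bigr),
\]
which is exactly the conclusion of the corollary. There is no real obstacle here: the corollary is essentially a repackaging of Lemma 3.3 once one recognizes that the cyclic module hypothesis on $M$ is automatically satisfied for $M = S^{(m)}\rho$ by the length argument above.
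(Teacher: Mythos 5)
Your proof is correct and is essentially the paper's own argument: the paper defers to Bj\"ork's Corollary 8.11, whose proof is exactly this reduction to Lemma 3.3 with $\alpha=\rho$ and $M=S^{(m)}\rho$, using Lemma 3.1 to see that $S^{(m)}/S^{(m)}\rho\cong (S/S\rho)^{(m)}$ has finite length. (Only a wording quibble: the hypothesis you verify is finite length of $S^{(m)}/M$, not a ``cyclic module hypothesis.'')
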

\begin{proof} See [1], Corollary 8.11, page 29.
\end{proof}

\begin{lemma}
Let $ \delta_1,...,\delta_m $ be a set of non zero elements  in  $F[[x]]\langle\partial_x \rangle \subset S$ and let $0\neq \rho\in S$. Consider the free
$S-module$  $S^{m+1}=S\varepsilon_0 +S\varepsilon_1 + \cdots + S\varepsilon_m $ with basis $\varepsilon_0, \cdots, \varepsilon_m$. Then there exists some $f\in \mathbb{Z}[[x]]\langle\partial_x \rangle$ such that $$S^{(m+1)}=S^{(m+1)}\rho + S(\varepsilon_0 + \delta_1f\varepsilon_1 + \cdots +\delta_1f\varepsilon_m ) $$ .
\end{lemma}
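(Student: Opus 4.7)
The plan is to reduce the claim to an application of Corollary 3.4 by analysing the quotient
\[
N \;:=\; S^{(m+1)}\bigl/\bigl(S^{(m+1)}\rho + Sw\bigr), \qquad w := \varepsilon_0 + \delta_1 f\varepsilon_1 + \cdots + \delta_m f\varepsilon_m,
\]
and showing that $f$ can be chosen so that $N = 0$, which is equivalent to the desired identity.

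The first step is to use $w$ to eliminate $\varepsilon_0$. In $N$ the relation $\bar w = 0$ gives $\bar\varepsilon_0 = -\sum_{i=1}^{m} \delta_i f\,\bar\varepsilon_i$, so $N$ is generated by the images of $\varepsilon_1,\dots,\varepsilon_m$. I then define the canonical surjection $\phi : S^{(m)} \twoheadrightarrow N$ sending $\varepsilon_i \mapsto \bar\varepsilon_i$ and compute its kernel. An element $\sum_{i=1}^{m} a_i \varepsilon_i$ lies in $\ker\phi$ iff there exist $c, b_0, b_1, \dots, b_m \in S$ with
\[
\sum_{i=1}^{m} a_i \varepsilon_i \;=\; c\,w + \sum_{j=0}^{m} b_j \rho\, \varepsilon_j
\]
in $S^{(m+1)}$. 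Comparing $\varepsilon_0$-components forces $c = -b_0 \rho$, and substituting this into the remaining coordinates yields
\[
\sum_{i=1}^{m} a_i \varepsilon_i \;=\; -b_0\bigl(\rho\delta_1 f\varepsilon_1 + \cdots + \rho\delta_m f\varepsilon_m\bigr) + \sum_{i=1}^{m} b_i \rho\,\varepsilon_i.
\]
Hence $\ker\phi = S^{(m)}\rho + S\bigl(\rho\delta_1 f\varepsilon_1 + \cdots + \rho\delta_m f\varepsilon_m\bigr)$, so there is a natural isomorphism
\[
N \;\cong\; S^{(m)}\Big/\bigl(S^{(m)}\rho + S(\rho\delta_1 f\varepsilon_1 + \cdots + \rho\delta_m f\varepsilon_m)\bigr).
\]

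With this identification in hand the proof concludes by quoting Corollary 3.4 applied to the element $\rho$ and the $m$-tuple $\delta_1,\dots,\delta_m$: it produces an $f \in \mathbb{Z}[[x]]\langle\partial_x\rangle$ for which the right-hand side above equals zero. For that same $f$ one then has $N = 0$, i.e.\ $S^{(m+1)} = S^{(m+1)}\rho + Sw$, as required. I do not expect a genuine obstacle here; the only point where one must be attentive is the kernel computation, since the coupling $c = -b_0\rho$ is precisely what converts a naive elimination of $\varepsilon_0$ into the factor of $\rho$ needed to match the conclusion of Corollary 3.4.
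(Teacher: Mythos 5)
Your proposal is correct: the kernel computation is right (the $\varepsilon_0$-comparison forcing $c=-b_0\rho$ is exactly the point that makes Corollary 3.4 applicable), and choosing $f$ by Corollary 3.4 then kills $N$. The paper gives no argument of its own here — it simply cites Bj\"ork's Lemma 8.13 — and your reduction to Corollary 3.4 is essentially the standard proof of that cited result, so this counts as the same approach.
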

\begin{proof} See [1], Lemma 8.13, page 29.
\end{proof}

\section{Lemmas for $D_r$ and $R_r$}

In lemma 4.4 we will need to apply Weierstrass' preparation theorem to an non-zero element $v \in D_n =K[[x_1,..., x_n]]\langle\partial_1,...\partial_n \rangle.$  We will then state a separate lemma to prepare this element.

\begin{lemma} Let $v \in D_n$ be a non zero element. For any $r,$ $0 \leq r \leq n-1$, $v$ can be written in the following form:

$$v=\omega_1\beta_1 G_1 + \cdots + \omega_m\beta_m G_m , $$ where $\omega_1,...,\omega_m\in K[[x_1,...,x_n]] $ are units  , $\beta_1,...,\beta_m \in K[[x_{r+1}]]\langle \partial_{r+1}\rangle$ and $G_1,...,G_m \in D(r+1):=K[[x_1,...,\hat{x_{r+1}},...,x_n]]\langle \partial_1,...,\hat{\partial_{r+1}},...,\partial_n\rangle$.
\end{lemma}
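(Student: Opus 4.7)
The plan is to expand $v$ into a finite sum of monomials in the partial derivatives, apply Weierstrass' preparation theorem to each coefficient in order to replace it by (unit)$\times$(polynomial in $x_{r+1}$), and then rearrange using the commutation relations of $D_n$. Concretely, write $v = \sum_{\alpha} f_\alpha \partial^\alpha$ with $\alpha \in \mathbb{N}^n$ ranging over a finite set and $f_\alpha \in K[[x_1,\ldots,x_n]]$; by linearity it is enough to decompose each summand $f_\alpha \partial^\alpha$ in the required form.

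Fix such an $\alpha$ with $f_\alpha \neq 0$ and split $\partial^\alpha = \partial_{r+1}^{\alpha_{r+1}} \partial^{\alpha'}$, where $\alpha' = (\alpha_1,\ldots,\hat\alpha_{r+1},\ldots,\alpha_n)$. Viewing $f_\alpha$ as an element of the power series ring $K[[x_1,\ldots,\hat x_{r+1},\ldots,x_n]][[x_{r+1}]]$ and assuming $x_{r+1}$-regularity (see below), Weierstrass' preparation theorem yields a factorization $f_\alpha = u_\alpha P_\alpha$, where $u_\alpha$ is a unit in $K[[x_1,\ldots,x_n]]$ and
$$P_\alpha = \sum_{j=0}^{d_\alpha} a_{\alpha,j}(x_1,\ldots,\hat x_{r+1},\ldots,x_n)\, x_{r+1}^j$$
is a polynomial in $x_{r+1}$ with coefficients $a_{\alpha,j} \in K[[x_1,\ldots,\hat x_{r+1},\ldots,x_n]]$. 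Since $a_{\alpha,j}$ contains no $x_{r+1}$, it commutes with both $x_{r+1}$ and $\partial_{r+1}$; since $\partial^{\alpha'}$ contains no $\partial_{r+1}$, it commutes with $x_{r+1}$. Therefore
$$f_\alpha \partial^\alpha \;=\; \sum_{j=0}^{d_\alpha} u_\alpha \cdot \bigl(x_{r+1}^j \partial_{r+1}^{\alpha_{r+1}}\bigr) \cdot \bigl(a_{\alpha,j}\, \partial^{\alpha'}\bigr),$$
a finite sum of terms of the desired shape: $u_\alpha$ is a unit in $K[[x_1,\ldots,x_n]]$, $x_{r+1}^j \partial_{r+1}^{\alpha_{r+1}}$ lies in $K[[x_{r+1}]]\langle\partial_{r+1}\rangle$, and $a_{\alpha,j}\,\partial^{\alpha'}$ lies in $D(r+1)$. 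Summing over $\alpha$ and $j$ produces the claimed decomposition of $v$.

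The main obstacle is that the classical form of Weierstrass' preparation requires the coefficient $f_\alpha$ to be $x_{r+1}$-regular, whereas the lemma places no such restriction on $v$. I expect this to be handled by a preliminary reduction on $f_\alpha$: either by factoring off a divisor belonging to $K[[x_1,\ldots,\hat x_{r+1},\ldots,x_n]]$ that absorbs the non-regular part, or by iteratively splitting $f_\alpha$ into pieces each of which does admit the preparation. Once regularity is in hand, the rest of the argument is purely formal and rests only on the commutation relations in $D_n$ and the fact that $x_{r+1}, \partial_{r+1}$ commute with every element of $D(r+1)$.
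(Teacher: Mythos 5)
Your decomposition is exactly the one the paper uses: expand $v=\sum_\alpha p_\alpha\partial^\alpha$, apply Weierstrass' preparation to each coefficient with respect to $x_{r+1}$, and regroup each resulting term as $(\text{unit})\cdot\bigl(x_{r+1}^{j}\partial_{r+1}^{\alpha_{r+1}}\bigr)\cdot\bigl(b_j\,\partial^{\alpha'}\bigr)$ using the fact that $b_j\in K[[x_1,\dots,\hat{x}_{r+1},\dots,x_n]]$ commutes with $x_{r+1}$ and $\partial_{r+1}$ while $\partial^{\alpha'}$ commutes with $x_{r+1}$. That part of your argument is correct and coincides with the paper's proof essentially line for line.

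The gap is precisely the step you flagged and then deferred: arranging $x_{r+1}$-regularity of the coefficients. The paper's device is a single \emph{generic linear change of the variables} $x_1,\dots,x_n$: since $K$ is infinite and only finitely many $p_\alpha$ occur, one linear substitution simultaneously makes every $p_\alpha(0,\dots,0,x_{r+1},0,\dots,0)$ nonzero with valuation equal to $\mathrm{ord}(p_\alpha)$ (the standard simultaneous preparation of finitely many series, cited from [2, Lemma 2, p.\ 17]). Neither of your proposed substitutes works as stated. Failure of regularity only says that $p_\alpha$ lies in the \emph{ideal} generated by $x_1,\dots,\hat{x}_{r+1},\dots,x_n$, not that it is divisible by an element of $K[[x_1,\dots,\hat{x}_{r+1},\dots,x_n]]$: already for $n=3$ with distinguished variable $x_1$, the series $x_2+x_1x_3$ is irreducible, is not $x_1$-regular, and has no non-unit divisor in $K[[x_2,x_3]]$, so there is nothing to ``factor off.'' And the alternative of iteratively splitting $p_\alpha$ into pieces (say $p_\alpha=\sum_{j\neq r+1}x_j g_j$ and recursing on the $g_j$) has no visible termination; the finiteness of the sum $\sum_i\omega_i\beta_iG_i$ is exactly the content to be proved. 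So you do need the coordinate change or some equally concrete regularization. If you adopt the paper's route, record one caveat: a linear substitution does not preserve the distinguished subrings $K[[x_{r+1}]]\langle\partial_{r+1}\rangle$ and $D(r+1)$ appearing in the conclusion, so you should check that the statement you actually need downstream (Lemma 4.4, where $q$, $R_r$ and $D_{r+1}[[x_{r+2},\dots,x_n]]$ are tied to the original coordinates) is compatible with that substitution.
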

\begin{proof}

Since $v \in D_n$ is a non zero element, we can write $v$ as a finite sum $\Sigma_{\alpha} p_{\alpha}(x_1,...,x_n)\partial^{\alpha}$, where each $p_{\alpha}(x_1,...,x_n)\in K[[x_1,...,x_n]] $ and  $\partial^{\alpha}=\partial_1^{\alpha_1}\cdots\partial_n^{\alpha_n}$. Let $k_\alpha$ be the order of the series $p_\alpha$. After a suitable linear change of variables we can assume that $p_\alpha(0,...0,x_{r+1},0,...,0) \in K[[x_{r+1}]]$ is non zero for all $\alpha$ and has valuation $k_\alpha$ as a series in the variable $x_{r+1}$. In fact, since $K$  is an infinite field and the $p_{\alpha}$ are in a finite number there is a single
linear change of variables that works for all  $p_{\alpha}$.
( See [2], lemma 2, page 17 and the remark that follows this lemma).

Now let us fixed $\alpha$. Then the series $p_{\alpha}(x_1,...,x_n)$ can be written as \linebreak  $\Sigma_{i=0}^\infty q_i(x_1,...,\hat{x_{r+1}} ,...,x_n)x_{r+1}^i$, where $q_i(x_1,...,\hat{x_{r+1}}, ...,x_n)\in K[[x_1,...,\hat{x_{r+1}},...,x_n]] .$ By our assumptions,  $q_{k_{\alpha}}(x_1,...,\hat{x_{r+1}} ,...,x_n)$ is  a unit in $K[[x_1,...,\hat{x_{r+1}},...,x_n]]$.By  Weierstrass' preparation theorem (see [4], page 208)  we can write \linebreak
$$p_{\alpha}=u(b_0+b_1 x_{r+1} + \cdots + x_{r+1}^{k_\alpha}),$$
where $u$ is a unit in $K[[x_1,...,x_n]]$ and $b_j\in K[[x_1,...,\hat{x_{r+1}},...,x_n]]$. Therefore we have \linebreak
$p_{\alpha}\partial^{\alpha}=u(b_0+b_1 x_{r+1} + \cdots + x_{r+1}^{k_\alpha})\partial_1^{\alpha_1}\cdots \partial_{r+1}^{\alpha_{r+1}}\cdots\partial_n^{\alpha_n}=$
$u(\partial_{r+1}^{\alpha_{r+1}})(b_0\partial_1^{\alpha_1}...\hat{\partial_{r+1}^{\alpha_{r+1}}}
...\partial_n^{\alpha_n})+u(x_{r+1}\partial_{r+1}^{\alpha_{r+1}})(b_1\partial_1^{\alpha_1}...\hat{\partial_{r+1}^{\alpha_{r+1}}}
...\partial_n^{\alpha_n})+\cdots+ u(x_{r+1}^{k_{\alpha}}\partial_{r+1}^{\alpha_{r+1}})(\partial_1^{\alpha_1}...\hat{\partial_{r+1}^{\alpha_{r+1}}}
...\partial_n^{\alpha_n})$,\linebreak
which has the desired form. Since $v=\sum_{\alpha}p_{\alpha}\partial^{\alpha}$ we are done.

\end{proof}

The following lemmas 4.3 and 4.4 are equivalent. We will then prove just the second. To prove it we will need the following result.
\begin{lemma} Let $0\leq r\leq n-1$ and let $q$ be a non zero element of $D_r[[x_{r+1}, \cdots, x_n]]$ and let $a_1, ..., a_t$  be a finite set in $D_n$.
Then there exists some $\rho \in D_r[[x_{r+1}, \cdots, x_n]]$, $\rho \neq 0$, such that $\rho a_j \in D_nq,$ for each $j=1,...,t$.
\end{lemma}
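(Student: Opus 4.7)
The plan is to work in the Ore quotient division ring $\tilde B:=F_r((x_{r+1},\ldots,x_n))$ of $B:=D_r[[x_{r+1},\ldots,x_n]]$ (existing by the discussion in Section~2) and in the overring $R_r=\tilde B\langle\partial_{r+1},\ldots,\partial_n\rangle$. We have natural inclusions $B\subseteq\tilde B$ and $D_n=B\langle\partial_{r+1},\ldots,\partial_n\rangle\subseteq R_r$, all respecting the $(x,\partial)$-commutation relations, and since $B$ has a quotient division ring it is both left and right Ore. The point is that in $\tilde B$ the element $q$ is a unit, so we can divide by $q$ inside $R_r$ and then descend back into $D_n$ by clearing left denominators.

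First I would reduce to $t=1$. Assuming the single-$a$ case, for each $j$ pick a nonzero $\rho_j\in B$ with $\rho_j a_j\in D_nq$. The left ideals $B\rho_1,\ldots,B\rho_t$ are nonzero, and by the left Ore property of $B$ their intersection is nonzero; pick $\rho=\sigma_j\rho_j\in\bigcap_j B\rho_j\setminus\{0\}$ with $\sigma_j\in B$. Then $\rho a_j=\sigma_j\rho_j a_j\in D_n q$ for every $j$.

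For $t=1$, fix $a\in D_n$. As $q\in B\setminus\{0\}$ is a unit in $\tilde B$, the product $aq^{-1}\in R_r$ is defined and $a=(aq^{-1})q$ in $R_r$. Expand uniquely $aq^{-1}=\sum_\alpha \tilde b_\alpha\,\partial^\alpha$ with finitely many nonzero $\tilde b_\alpha\in\tilde B$, and write each as $\tilde b_\alpha=\rho_\alpha^{-1}b_\alpha$ with $\rho_\alpha,b_\alpha\in B$ and $\rho_\alpha\neq 0$. Iterated application of the left Ore condition in $B$ yields a nonzero $\rho\in B$ lying in $B\rho_\alpha$ for every $\alpha$, so $\rho\tilde b_\alpha\in B$ for every $\alpha$, and consequently $\rho\cdot aq^{-1}\in B\langle\partial_{r+1},\ldots,\partial_n\rangle=D_n$. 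Multiplying by $q$ on the right gives
\[
\rho a=(\rho\cdot aq^{-1})\,q\in D_n q,
\]
as required.

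The main (and essentially only) nontrivial point is the simultaneous clearing of left denominators for the finitely many coefficients $\tilde b_\alpha$, which is exactly the left Ore condition in $B$; the rest is formal once the embedding $D_n\hookrightarrow R_r$ is in place. Note that neither Lemma~4.1 nor the material of Section~3 is used here: those are reserved for deducing the subsequent Lemma~4.4 from the present result.
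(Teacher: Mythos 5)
Your overall strategy --- invert $q$ in the quotient division ring of $B:=D_r[[x_{r+1},\ldots,x_n]]$, expand $aq^{-1}$ in $R_r$, and clear left denominators of the finitely many coefficients --- is exactly Bj\"ork's argument for the Weyl algebra (the paper itself only cites [1, Lemma 8.5] here), and the Ore-theoretic steps (intersecting the left ideals $B\rho_j$, writing $\tilde b_\alpha=\rho_\alpha^{-1}b_\alpha$) are unobjectionable. The gap is the identification $B\langle\partial_{r+1},\ldots,\partial_n\rangle=D_n$, which is \emph{false} for $r\geq 1$. An element of $B=D_r[[x_{r+1},\ldots,x_n]]$ is a power series in $x_{r+1},\ldots,x_n$ whose coefficients lie in $D_r$ and may have unbounded order in $\partial_1,\ldots,\partial_r$ as the power-series index varies --- for instance $\sum_{k\geq 0}\partial_1^k x_{r+1}^k$ lies in $B$ but not in $D_n$, since every element of $D_n$ is a \emph{finite} sum $\sum_\alpha p_\alpha\partial^\alpha$. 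Hence $D_n=\bigl(K[[x_1,\ldots,x_n]]\langle\partial_1,\ldots,\partial_r\rangle\bigr)\langle\partial_{r+1},\ldots,\partial_n\rangle$ is a \emph{proper} subring of $B\langle\partial_{r+1},\ldots,\partial_n\rangle$, and from $\rho\tilde b_\alpha\in B$ you may only conclude $\rho a\in B\langle\partial_{r+1},\ldots,\partial_n\rangle\,q$, which is weaker than the required $\rho a\in D_nq$. The same issue undermines the reduction to $t=1$: there you need $\sigma_j d_j\in D_n$ where $\rho_j a_j=d_jq$ and $\sigma_j\in B$, which again presupposes $B\subseteq D_n$. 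This is precisely the asymmetry between the $x$'s and the $\partial$'s that the Introduction singles out as the main obstacle in passing from the polynomial to the power-series case; in the Weyl algebra one does have $A_r[x_{r+1},\ldots,x_n]\langle\partial_{r+1},\ldots,\partial_n\rangle=A_n$, which is why Bj\"ork's proof closes there.

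To repair the argument you would need the denominators cleared inside the smaller ring $C:=K[[x_1,\ldots,x_n]]\langle\partial_1,\ldots,\partial_r\rangle=D_n\cap B$, for which $C\langle\partial_{r+1},\ldots,\partial_n\rangle=D_n$ genuinely holds: that is, you must produce $0\neq\rho$ with $\rho\tilde b_\alpha\in C$ for all $\alpha$, not merely $\rho\tilde b_\alpha\in B$. This requires establishing that $C$ is an Ore domain and that the coefficients $\tilde b_\alpha$ of $aq^{-1}$ are left fractions over $C$ (equivalently, relating the quotient ring of $C$ to $F_r((x_{r+1},\ldots,x_n))$), none of which is supplied in your write-up. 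As it stands the proof does not establish the lemma for $r\geq 1$.
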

\begin{proof} See [1], lemma 8.5, page 26.
\end{proof}

\begin{lemma}
Let $0\leq r\leq n-1$ and let $0\neq q\in D_{r+1}[[x_{r+2},...,x_n]].$ If $u$ and $v$ are two elements in $D_n$ with $v\neq 0$, then there exists $f\in D_n $ and $Q_r\in D_r[[x_{r+1},...,x_n]]$ such that $$Q_r\in D_nq + D_n(u+ vf).$$
\end{lemma}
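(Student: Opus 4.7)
The plan is to combine three tools: Weierstrass preparation (Lemma 4.1) to split $v$ into separated-variable form, the extended Ore-type Lemma 4.2 to find a common left multiplier absorbing unwanted factors into $D_n q$, and the $S$-module existence result Lemma 3.5, applied in the ring $S = F_r((x_{r+1}))\langle\partial_{r+1}\rangle$ with $T = F_r$ (the quotient skew field of the Ore domain $D_r$) and $F = K$.

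First I would apply Lemma 4.1 to write $v = \omega_1\beta_1 G_1 + \cdots + \omega_m\beta_m G_m$ with $\omega_i \in K[[x_1, \ldots, x_n]]$ units, $0 \neq \beta_i \in K[[x_{r+1}]]\langle\partial_{r+1}\rangle$, and $G_i \in D(r+1)$. Next I would invoke Lemma 4.2 with index $r+1$ (or, when $r = n-1$, its direct analog in the noetherian Ore domain $D_n$) applied to the finite set $\{u, G_1, \ldots, G_m\} \subset D_n$; this produces a nonzero $\rho \in D_{r+1}[[x_{r+2}, \ldots, x_n]]$ such that $\rho u \in D_n q$ and $\rho G_i \in D_n q$ for each $i$.

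Then I would pass to $S$. Since $D_r$ commutes elementwise with $K[[x_{r+1}]]\langle\partial_{r+1}\rangle$ inside $D_{r+1}$, we have $D_{r+1} \subset F_r[[x_{r+1}]]\langle\partial_{r+1}\rangle \subset S$, and the $\beta_i$ sit in $F[[x]]\langle\partial_x\rangle = K[[x_{r+1}]]\langle\partial_{r+1}\rangle$. Expanding $\rho$ as a formal series in $x_{r+2}, \ldots, x_n$ with coefficients in $D_{r+1}$ (a valid and unique expansion, since these variables commute with $D_{r+1}$), select a nonzero coefficient $\tilde\rho \in D_{r+1} \subset S$. Now apply Lemma 3.5 in $S$ with $\delta_i = \beta_i$ and $\tilde\rho$ in place of its $\rho$ to obtain $f \in \mathbb{Z}[[x_{r+1}]]\langle\partial_{r+1}\rangle \subset D_n$ with
\[
\varepsilon_0 \in S^{(m+1)}\tilde\rho + S(\varepsilon_0 + \beta_1 f \varepsilon_1 + \cdots + \beta_m f \varepsilon_m).
\]

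Finally I would translate this $S$-module identity back to $D_n$. Reading off coefficients gives scalar relations $1 = a_0\tilde\rho + b$ and $a_i\tilde\rho + b\beta_i f = 0$ in $S$; clearing denominators (in $F_r$ via left Ore multipliers in $D_r$, and in $x_{r+1}$ via a suitable power of $x_{r+1}$) promotes these to congruences in $D_n$ modulo $D_n q$. Multiplying the decomposition $u + vf = u + \sum \omega_i\beta_i G_i f$ on the left by the cleared analog of $b$, and using the annihilations $\rho u, \rho G_i \in D_n q$ together with unitary absorption of the $\omega_i$'s (as in Bj\"ork's Weyl-algebra argument), the unwanted terms cancel and what remains is a nonzero element $Q_r \in D_r[[x_{r+1}, \ldots, x_n]]$ of $D_n q + D_n(u + vf)$. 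The main obstacle will be this last translation step: it requires careful bookkeeping of the non-commutativity among the units $\omega_i$, the one-variable operators $\beta_i$, and the complementary-variable operators $G_i$, and the choice of $\tilde\rho$ together with the denominator-clearing must be aligned with $\rho$ well enough that the congruences $\rho u, \rho G_i \in D_n q$ actually translate into an identity placing $Q_r$ in the required subring $D_r[[x_{r+1}, \ldots, x_n]]$.
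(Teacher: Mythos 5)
Your overall skeleton (Lemma 4.1 to split $v$, Lemma 4.2 to produce $\rho$, Lemma 3.5 in a one-variable ring $S$) is the right one, and it matches the paper, which proves the equivalent Lemma 4.4 by exactly these tools. But there are genuine gaps. The most serious is that you apply Lemma 3.5 only once, with $\varepsilon_j$ mapped to $G_j$. That single application can only yield the identity $R_rq+R_r(u+vf)=R_rq+R_ru+R_rG_1+\cdots+R_rG_m$, and the left ideal on the right-hand side is in general a \emph{proper} left ideal of $R_r$ (the $G_j$ need not generate $R_r$ as a left ideal), from which there is no reason a nonzero element of $D_r[[x_{r+1},\ldots,x_n]]$ can be extracted; for instance $D_n\partial_{r+1}$ meets $D_r[[x_{r+1},\ldots,x_n]]$ only in $0$. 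The missing idea is the simplicity of $D(r+1)$: since some $G_i\neq 0$, one writes $1=\sum_{\nu}\sum_j b_\nu G_j a_\nu$, and the claim coming from Lemma 3.5 must be applied $l$ times, once for each right factor $a_\nu$ with $B_j=G_ja_\nu$ and with $u$ updated at each step, accumulating $f=f_1a_1+\cdots+f_la_l$. Only after this iteration does one reach the full ring $R_r$ (equivalently, a nonzero $Q_r$ after clearing denominators). Without this step the argument does not close.

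Two further points would also break your construction as written. First, you replace $\rho$ by a nonzero coefficient $\tilde\rho\in D_{r+1}$ of its expansion in $x_{r+2},\ldots,x_n$; but the whole purpose of $\rho$ is the containments $\rho u,\ \rho G_j\in D_nq$, which guarantee $\pi(S^{(m+1)}\rho)\subseteq R_rq$, and these containments are not inherited by an individual coefficient $\tilde\rho$. The correct move is to choose $S=T((x_{r+1}))\langle\partial_{r+1}\rangle$ with $T$ a skew field large enough that $\rho$ itself lies in $S$ (the paper takes $T=F_r((x_{r+1},\ldots,x_n))$), not to truncate $\rho$. Second, taking $F=K$ and $\delta_i=\beta_i$ strands the units $\omega_i$: since the $\omega_i$ are distinct units of $K[[x_1,\ldots,x_n]]$, the image $u+\beta_1fG_1+\cdots+\beta_mfG_m$ of your generator is not $u+vf$, and no common left multiple will "absorb" $m$ different units simultaneously. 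The paper avoids this by setting $\delta_i=\omega_i\beta_i$ and observing that $\delta_i\in F[[x_{r+1}]]\langle\partial_{r+1}\rangle$ for $F$ the quotient field of $K[[x_1,\ldots,\hat{x_{r+1}},\ldots,x_n]]$; then, since $f\in\mathbb{Z}[[x_{r+1}]]\langle\partial_{r+1}\rangle$ commutes with $G_j\in D(r+1)$, one gets $\sum_j\delta_jfG_ja_\nu=vfa_\nu$ exactly, with no bookkeeping left over.
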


Recall that $R_r=F_r((x_{r+1},...,x_n))\langle\partial_{r+1},...,\partial_n \rangle,$ where $F_r$ is the quotient ring of $D_r=K[[x_1,...,x_r]]\langle\partial_1,...,\partial_r \rangle$.

\begin{lemma}
Let $0\leq r\leq n-1$ and let $0\neq q\in D_{r+1}[[x_{r+2},...,x_n]]$ and let $u$ and $v\in D_n$ with $v\neq 0$. Then there exist some $f\in D_n$ such that $$ R_r=R_rq+R_r(u+vf).$$
\end{lemma}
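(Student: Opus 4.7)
My plan is to adapt Bj{\"o}rk's proof of the Weyl-algebra analogue to the formal power-series setting, using Weierstrass preparation to reduce the problem to the single-derivation framework of Section~3. As a first step I apply Lemma~4.1 to $v$ with the distinguished variable $x_{r+1}$, obtaining
\[
v = \omega_1 \beta_1 G_1 + \cdots + \omega_m \beta_m G_m,
\]
with $\omega_i \in K[[x_1,\ldots,x_n]]$ units, $\beta_i \in K[[x_{r+1}]]\langle \partial_{r+1}\rangle$, and $G_i \in D(r+1)$. The point of this step is to isolate the $(x_{r+1},\partial_{r+1})$-dependence of $v$ in the one-variable factors $\beta_i$, so that the $\beta_i$'s can play the role of the $\delta_i$'s in the Section~3 lemmas.

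Next, I would view $R_r$ inside the single-derivation framework $S = T((x))\langle\partial_x\rangle$ of Section~3, taking $x = x_{r+1}$, $\partial = \partial_{r+1}$, and $T$ a skew field absorbing $F_r$, the Laurent series in $x_{r+2},\ldots,x_n$, and the remaining derivations $\partial_{r+2},\ldots,\partial_n$. Such a $T$ is constructed by iterated Ore localisation, peeling off one variable-derivation pair at a time (an induction on $n-r$, with base case $r = n-1$ where $T = F_{n-1}$ directly). Under this identification $K[[x_{r+1}]]\langle\partial_{r+1}\rangle$ sits inside $F[[x]]\langle\partial_x\rangle$ with $F = K$, so the $\beta_i$'s are valid inputs for Lemma~3.5.

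I then form the free $S$-module $S^{(m+1)} = S\varepsilon_0 + S\varepsilon_1 + \cdots + S\varepsilon_m$ and set up a surjection $\pi : S^{(m+1)} \to R_r/R_r q$ sending $\varepsilon_0$ to the class of $u$ and $\varepsilon_1,\ldots,\varepsilon_m$ to classes built from the $\omega_i$'s and $G_i$'s, chosen so that applying Lemma~3.5 (with $\rho = q$, $\delta_i = \beta_i$) yields an $f \in \mathbb{Z}[[x_{r+1}]]\langle\partial_{r+1}\rangle \subset D_n$ for which $\pi(\varepsilon_0 + \beta_1 f \varepsilon_1 + \cdots + \beta_m f \varepsilon_m)$ coincides with the class of $u + v f$. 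Surjectivity of this image will then give the desired equation $R_r = R_r q + R_r(u + v f)$, with Lemma~4.2 available to bridge $D_n$ and $D_r[[x_{r+1},\ldots,x_n]]$ wherever needed in the construction.

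The main obstacle is the non-commutativity of the factors $\omega_i$, $\beta_i$, and $G_i$: the single $f$ produced by Lemma~3.5 must be compatible with all $m$ summands of $v$ simultaneously, so that $\pi$ really does push $\varepsilon_0 + \sum \beta_i f \varepsilon_i$ onto $u + v f$ and not merely onto $u$ plus a sum with distinct right-factors. This is where the precise form of Lemma~4.1 (the $\omega_i$ are \emph{units}, and the $G_i$ are \emph{free} of $x_{r+1}$ and $\partial_{r+1}$) is essential: it is exactly what is needed to define $\pi$ consistently and to make the identity coming from Lemma~3.5 descend to the required equation inside $R_r$.
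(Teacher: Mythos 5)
Your overall strategy (Lemma 4.1 to split off the $(x_{r+1},\partial_{r+1})$-dependence, then the Section 3 machinery via a module map $\pi$) is the right one and matches the paper, but three concrete steps in your setup would fail as written. First, you apply Lemma 3.5 with $\rho=q$. This does not work: to conclude $\pi(S^{(m+1)}\rho)\subseteq R_rq$ you need $\rho\,\pi(\varepsilon_j)\in R_rq$ for every $j$, i.e.\ $q B_j\in R_rq$, and since $R_r$ is noncommutative $q$ need not normalize its left ideal. This is exactly why Lemma 4.2 is invoked \emph{before} Lemma 3.5: it produces a different nonzero $\rho\in D_{r+1}[[x_{r+2},\ldots,x_n]]$ with $\rho u,\ \rho B_j\in D_nq$, and that $\rho$ is the one fed into Lemma 3.5. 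Merely keeping Lemma 4.2 ``available wherever needed'' while writing $\rho=q$ leaves the key containment unproved. Second, you take $\delta_i=\beta_i$ over $F=K$. The units $\omega_i$ sit to the \emph{left} of $\beta_i$ in the decomposition $v=\sum\omega_i\beta_iG_i$, while $\pi(\varepsilon_i)$ multiplies on the right; so no choice of image for $\varepsilon_i$ can recover the factor $\omega_i$, and $\pi(\varepsilon_0+\sum\beta_if\varepsilon_i)$ will not equal the class of $u+vf$. You must set $\delta_i=\omega_i\beta_i$, which forces $F$ to be the quotient field of $K[[x_1,\ldots,\hat{x}_{r+1},\ldots,x_n]]$ rather than $K$ — one of the places where the power-series setting genuinely differs from the Weyl algebra.

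Third, and most seriously, you assert that $\pi:S^{(m+1)}\to R_r/R_rq$ is a surjection without justification. Surjectivity amounts to $R_rq+R_ru+\sum_iR_rB_i=R_r$, which is false for generic choices of $m$ targets: the $G_i$ generate $D(r+1)$ only as a \emph{two-sided} ideal (by simplicity of $D(r+1)$), not as a left ideal. The paper handles this by writing $1=\sum_{j,\nu}b_\nu G_ja_\nu$, proving the displayed identity as a ``Claim'' for an arbitrary $m$-tuple $B_1,\ldots,B_m$, and then iterating it $l$ times with $B_j=G_ja_\nu$, accumulating $f=f_1a_1+\cdots+f_la_l$; at each step the commutation of $f_\nu\in\mathbb{Z}[[x_{r+1}]]\langle\partial_{r+1}\rangle$ with $G_j\in D(r+1)$ is what turns $\sum_j\delta_jf_\nu G_ja_\nu$ into $vf_\nu a_\nu$. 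Your proposal contains neither the simplicity argument nor the iteration over the $a_\nu$, so the final equality $R_rq+R_r(u+vf)=R_r$ is not reached. Until these three points are supplied, the argument is a plausible outline rather than a proof.
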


\begin{proof}

Let us fix $r$ such that $0 \leq r \leq n-1$. Since $v\neq 0$, by lemma 4.1  we can write $$v=\omega_1\beta_1 G_1 + \cdots+ \omega_m\beta_m G_m ,$$
 where  $\omega_1,...,\omega_m\in K[[x_1,...,x_n]] $ are units,  $\beta_1,...,\beta_m \in K[[x_{r+1}]]\langle \partial_{r+1}\rangle$ and $G_1,...,G_m \in D(r+1)=K[[x_1,...\hat{x_{r+1}} ...,x_n]]\langle \partial_1,...,\hat{\partial_{r+1}},...,\partial_n\rangle$.

  Let $\delta_i=\omega_i\beta_i$, for each $1\leq i\leq m$. Then $\delta_i\in K[[x_1,...,x_n]]\langle\partial_{r+1}\rangle$. To apply lemma 3.5, we observe that $K[[x_1,...,x_n]]= (K[[x_1,...\hat{x_{r+1}},...,x_n]])[[x_{r+1}]]  \subset F[[x_{r+1}]]$, where $F$ is the quotient field of $K[[x_1,...,\hat{x_{r+1}},...,x_n]]$. Therefore $\delta_i\in F[[x_{r+1}]]\langle\partial_{r+1}\rangle$, $F$ a field of characteristic zero. If $T=F_r((x_{r+1},...,x_n))$, then $T$ is a skew field and $F\subset T$.

 Since $v\neq 0$, we have that some $G_i\neq 0$. The ring $D(r+1) $ is simple, which implies the 2-sided ideal generated by $G_1,...,G_m$ is the whole ring $D(r+1)$. This gives finite sets $a_1,...,a_l$ and $b_1,...,b_l$ in $D(r+1)$ such that

 $$1=\Sigma_{j=1}^{m} \Sigma_{\nu = 1}^{l} b_\nu G_j  a_\nu$$ and hence $D(r+1)=\Sigma\Sigma D(r+1) G_j a_\nu$. Identifying $D(r+1)$ with a subring of $R_r$ we conclude that $R_r=\Sigma\Sigma R_r G_j a_\nu$.

 At this stage we need the following

 \emph{Claim:} To each $m-$tuple $B_1,...,B_m$ in $D(r+1)$ there exists some \linebreak  $f\in Z[[x_{r+1}]]\langle\partial_{r+1}\rangle$ such that
 $$R_rq + R_r u + R_rB_1+\cdots +R_rB_m =R_rq +  R_r(u+ \delta_1 f B_1 + \cdots + \delta_m f B_m).$$
 In fact, since $0\neq q\in D_{r+1}[[x_{r+2},...,x_n]]$, it follows from lemma 4.2 that there exists some $0\neq\rho\in D_{r+1}[[x_{r+2},...,x_n]]$ such that $\rho B_j\in D_n q $ for all $j=1,...,m$ and also $\rho u\in D_n q $.

 Let $S=T((x_{r+1}))\langle\partial_{r+1}\rangle $, then $0\neq\rho\in S$. Using the lemma $3.5$ we get some $f\in \mathbb{Z}[[x_{r+1}]]\langle\partial_{r+1}\rangle$ such that $S^{(m+1)}=S^{(m+1)}\rho + S(\varepsilon_0 + \delta_1f\varepsilon_1 + \cdots +\delta_1f\varepsilon_m )$.
 Since $S$ is a subring of $R_r$, we have that $R_r^{(m+1)}=R_r^{(m+1)}\rho + R_r(\varepsilon_0 + \delta_1f\varepsilon_1 +\cdots +\delta_1f\varepsilon_m )$.

 Considerer the $R_r-$linear application $\pi :R_r^{(m+1)} \rightarrow R_r$ defined by $\pi(\varepsilon_0)=u$ and $\pi(\varepsilon_j)=B_j$ for each $1\leq j \leq m.$ Then the image of $\pi$ is $R_ru + R_r B_1 +...+R_r B_m \subseteq R_r $; but $\rho u,$ $\rho B_j \in D_nq.$ Then we have that $\pi(\rho \varepsilon_0)=\rho\pi(\varepsilon_0)=\rho u \in R_r q $ and $\pi(\rho \varepsilon_j)=\rho\pi(\varepsilon_j)=\rho B_j\in R_r q $ for each $1\leq j\leq m$. Therefore $\pi(R_r^{(m+1)}\rho) \subseteq R_r q$. Then
  $$R_rq + R_ru + R_rB_1 +\cdots +R_rB_m=R_rq+\pi(R_r^{(m+1)})$$
  $$\subseteq R_rq + \pi(R_r^{(m+1)}\rho + R_r(\varepsilon_0 +\delta_1f\varepsilon_1 +\cdots+\varepsilon_0 +\delta_mf\varepsilon_m))$$
  $$\subseteq R_rq + \pi(R_r^{(m+1)}\rho) + R_r(u +\delta_1 fB_1 +\cdots+\delta_m fB_m)$$
  $$\subseteq R_rq + R_r(u +\delta_1 fB_1 +\cdots+\delta_m fB_m)$$
and this proves the claim because the opposite inclusion is clear.

Now we apply the claim to $B_j=G_ja_1, j=1,...,m$. Then, there exists $f_1\in \mathbb{Z}[[x_{r+1}]]\langle\partial_{r+1}\rangle$ such that
$$R_rq + R_ru + R_rG_1a_1 +\cdots+R_rG_ma_1=R_rq + R_r(u +\delta_1 f_1 G_1 a_1 +\cdots+\delta_m f_1 G_m a_1).$$
Since $f_1\in \mathbb{Z}[[x_{r+1}]]\langle\partial_{r+1}\rangle $ and $G_j\in D(r+1)$ commute, we have that
 $$\delta_1 f_1 G_1 a_1 +\cdots+\delta_m f_1 G_m a_1=\delta_1  G_1f_1 a_1 +\cdots+\delta_m  G_mf_1 a_1=\omega_1\beta_1G_1f_1 a_1 + \cdots+ \omega_m\beta_m G_mf_1 a_1$$  $=vf_1a_1,$
since $v=\omega_1\beta_1 G_1 + \cdots+ \omega_m\beta_m G_m$.
 Then, $$R_rq + R_ru + R_rG_1a_1 +\cdots+R_rG_ma_1=R_rq + R_r(u + vf_1a_1).$$

Now we apply the claim again with $u$ replaced by $u +vf_1a_1$ and  $B_j=G_ja_2, j=1,...,m$. There exists $f_2\in \mathbb{Z}[[x_{r+1}]]\langle\partial_{r+1}\rangle$ such that $R_rq + R_r(u +vf_1a_1) + \sum R_r G_ja_2 =R_rq + R_r(u + vf_1a_1 + vf_2a_2)$. Using the previous equation we have $$R_rq + R_ru + \sum R_rG_ja_1 + \sum R_rG_ja_2= R_rq+R_r(u +vf_1a_1 + vf_2a_2).$$

In the next step we apply the claim with $B_j=G_ja_3, j=1,...,m$ and $u$ replaced by $u +vf_1a_1 +vf_2a_2$. After $l$ steps we have
$$R_rq + R_r(u + vf_1a_1 +\cdots+vf_la_l)=R_rq + R_ru + \sum\sum R_rG_ja_\nu =R_r.$$

Hence the lemma follows with $f=f_1a_1 +\cdots+f_la_l$.
\end{proof}

\section{The Principal Result}

\begin{lemma}
Let $a,b$ and $c$   non zero elements of $D_n$. For each $0\leq r\leq n$ there exist $q_r\in D_r[[x_{r+1},...,x_n]], q_r \neq 0$ and $d_r,e_r\in D_n$ such that $$q_rc\in D_n(a+d_rc) + D_n(b+e_rc).$$
\end{lemma}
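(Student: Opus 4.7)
The plan is to proceed by downward induction on $r$, from $r=n$ down to $r=0$. For the base case $r=n$, the ring $D_r[[x_{r+1},\ldots,x_n]]$ collapses to $D_n$, so $q_n$ is unconstrained apart from being nonzero. Since $D_n$ is a Noetherian Ore domain, there exist nonzero $p,q\in D_n$ with $pc=qa$. Taking $q_n := p$ and $d_n := e_n := 0$ gives $q_nc = qa \in D_na = D_n(a+d_nc) \subseteq D_n(a+d_nc)+D_n(b+e_nc)$, settling this case.

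For the inductive step from $r+1$ to $r$, I will assume the statement at level $r+1$: there exist a nonzero $q_{r+1} \in D_{r+1}[[x_{r+2},\ldots,x_n]]$ and elements $d_{r+1},e_{r+1},\alpha,\beta \in D_n$ with $q_{r+1}c = \alpha(a+d_{r+1}c)+\beta(b+e_{r+1}c)$. I will then invoke lemma 4.4 with $q := q_{r+1}$ and a suitably chosen pair $u,v \in D_n$ (with $v\neq 0$), built out of $a,b,c,d_{r+1},e_{r+1},\alpha,\beta$, to produce $f\in D_n$ satisfying $R_r = R_r q_{r+1}+R_r(u+vf)$. Because $F_r((x_{r+1},\ldots,x_n))$ is by definition the quotient ring of $D_r[[x_{r+1},\ldots,x_n]]$, every element of $R_r$ admits a nonzero left denominator in $D_r[[x_{r+1},\ldots,x_n]]$ bringing it back into $D_n$; clearing denominators in the above identity yields a nonzero $q_r \in D_r[[x_{r+1},\ldots,x_n]]$ together with $\sigma,\tau\in D_n$ such that $q_r = \sigma q_{r+1}+\tau(u+vf)$. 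Right multiplying by $c$ and substituting the inductive identity for $q_{r+1}c$ then rewrites $q_rc$ in the required form, provided the new parameters $d_r,e_r \in D_n$ are defined suitably in terms of $d_{r+1},e_{r+1},\sigma,\tau,f$ and the specific $u,v$.

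The hard part will be this last matching step. Right multiplication by $c$ does not preserve left ideals of $D_n$, and the absence of the symmetric $x$/$\partial$ structure that rescues the Weyl algebra argument means that the ``extra'' terms produced by $\tau(u+vf)c$ cannot be eliminated by routine commutator manipulations. They must instead be absorbed into $D_n(a+d_rc)+D_n(b+e_rc)$ by a deliberate redefinition of $d_{r+1},e_{r+1}$ into $d_r,e_r$; arranging $u$ and $v$ in lemma 4.4 so that this absorption can actually be carried out is where the bulk of the work will lie, and is precisely where the full freedom in the choice of $u,v$ afforded by lemma 4.4 is exploited.
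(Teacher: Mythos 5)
Your base case $r=n$ is correct and is exactly the paper's: $D_n$ is a left Ore domain, so $q_nc\in D_na$ for a suitable nonzero $q_n$, with $d_n=e_n=0$. You have also correctly identified the overall shape of the inductive step — invoke Lemma 4.4 with $q:=q_{r+1}$, clear denominators to obtain $q_r=\sigma q_{r+1}+\tau(u+vf)$ with $0\neq q_r\in D_r[[x_{r+1},\ldots,x_n]]$ (this is precisely the content of Lemma 4.3), then right-multiply by $c$. This is the route the paper takes, by reference to Bj\"ork's proof that Proposition 7.3$(r+1)$ implies Proposition 7.3$(r)$, with Bj\"ork's Lemma 7.5 replaced by Lemma 4.3.

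The difficulty is that your write-up stops exactly where the proof begins. You never specify $u$ and $v$, and you explicitly defer the absorption of the term $\tau(u+vf)c$ into $D_n(a+d_rc)+D_n(b+e_rc)$ to ``where the bulk of the work will lie.'' That absorption is not bookkeeping; it is the theorem. Concretely, write the inductive identity as $q_{r+1}c=\alpha a_1+\beta b_1$ with $a_1=a+d_{r+1}c$ and $b_1=b+e_{r+1}c$. A general element of the target ideal has the form $\gamma a_1+\delta b_1+(\gamma d'+\delta e')c$, so matching it against $q_rc=\sigma\alpha a_1+\sigma\beta b_1+\tau(u+vf)c$ forces $\tau(u+vf)\in\sigma\alpha D_n+\sigma\beta D_n$, a condition on \emph{right} multiples over which you have no control, since $\sigma$, $\tau$ and $f$ are outputs of Lemma 4.3 rather than inputs. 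Compounding this, $u+vf$ carries $f$ to the right of $v$, whereas the admissible perturbations $d_rc$ and $e_rc$ are \emph{left} multiples of $c$; this sidedness mismatch is the very obstruction you acknowledge. Overcoming it requires a specific algebraic identity — for instance, the observation that $(q_{r+1}+\beta e')c=\alpha a_1+\beta(b_1+e'c)$ for \emph{every} $e'\in D_n$, combined with Ore-type auxiliary elements of the transporter $\{w: wc\in D_na_1\}$, so that the two generators fed into Lemma 4.3 actually land in the transporter of $c$ into the new ideal — and none of this appears in your proposal. As it stands the inductive step is a statement of intent, not an argument, so the proof is incomplete.
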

\begin{proof} With $d_n=0$ and $e_n=0$ we see that the statement is true for $r=n$, since $D_n$ is a left Ore domain and $D_na \subset (D_na + D_nb).$

For $0\leq r\leq (n-1)$ the proof is by induction from $r+1$ to $r$. For this see the prove that Proposition 7.3(r+1) $\Rightarrow$ Proposition 7.3(r) in the book of Bj\"{o}rk [1, page 22], in which the lemma 7.5 should be replaced by our lemma 4.3.
\end{proof}

\begin{theorem}

Any left or right ideal in $K((x_1,...,x_n))\langle \partial_1,...,\partial_n\rangle$ can be ge-nerated by two elements. The same is true for the ring $\mathbb{C}\{\{x_1,...,x_n\}\}\langle \partial_1,...,\partial_n\rangle.$
\end{theorem}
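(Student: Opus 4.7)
The plan is to reduce the problem to the three-generator case and then apply Lemma 5.1 with the parameter $r=0$, where $D_0[[x_1,\ldots,x_n]] = K[[x_1,\ldots,x_n]]$ sits inside $K((x_1,\ldots,x_n))$ and so inside $E_n$ as \emph{units}.

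First I would handle left ideals. Since $E_n$ is left noetherian, any left ideal $I$ of $E_n$ is finitely generated. A standard induction on the number of generators shows it is enough to prove that if $I = E_n a + E_n b + E_n c$ with $a,b,c$ non-zero, then $I$ can be generated by two elements. Given such generators, I first clear denominators: every element of $E_n$ can be written as $s^{-1}w$ with $s\in K[[x_1,\ldots,x_n]]\setminus\{0\}$ and $w\in D_n$, and $s$ is a unit of $E_n$ because $s^{-1}\in K((x_1,\ldots,x_n))\subset E_n$. Replacing each generator by $s\cdot(\text{generator})$ leaves $I$ unchanged, so I may assume $a,b,c\in D_n\setminus\{0\}$.

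Next I apply Lemma 5.1 with $r=0$ to these $a,b,c$. This provides a non-zero $q_0\in D_0[[x_1,\ldots,x_n]] = K[[x_1,\ldots,x_n]]$ and elements $d_0,e_0\in D_n$ such that
$$q_0 c \in D_n(a+d_0 c) + D_n(b+e_0 c) \subset E_n(a+d_0 c) + E_n(b+e_0 c).$$
Since $q_0$ is a non-zero element of $K[[x_1,\ldots,x_n]]$, its inverse lies in $K((x_1,\ldots,x_n))\subset E_n$, so multiplying the above inclusion on the left by $q_0^{-1}$ yields $c \in E_n(a+d_0 c) + E_n(b+e_0 c)$. But then $a = (a+d_0 c) - d_0 c$ and $b=(b+e_0 c)-e_0 c$ also lie in that module, so
$$I = E_n a + E_n b + E_n c = E_n(a+d_0 c) + E_n(b+e_0 c),$$
a two-generated ideal. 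For right ideals, the ring $E_n$ admits an anti-automorphism (the formal adjoint $x_i\mapsto x_i$, $\partial_i\mapsto -\partial_i$), which converts right ideals into left ideals, so the same conclusion follows.

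For the convergent case $\mathbb{C}\{\{x_1,\ldots,x_n\}\}\langle\partial_1,\ldots,\partial_n\rangle$, the whole chain of Lemmas 3.1--5.1 carries over verbatim once one replaces $K[[x_1,\ldots,x_r]]$ by the ring of germs $\mathcal{O}_r$ of convergent power series: the key inputs are Weierstrass' preparation theorem (Lemma 4.1), the Ore property of the relevant rings, and simplicity of the ring $D(r+1)$ in its convergent analogue, all of which are classical in the analytic setting. The main obstacle I anticipate is not in the two-generator argument itself, which is short and formal once Lemma 5.1 is in hand, but in carefully checking that the analytic analogues of the auxiliary lemmas (especially the Weierstrass preparation used to put $v$ in the form of Lemma 4.1, and the Ore property used in Lemma 4.2) are valid in the convergent Laurent series context; once this is done, the argument above applies unchanged.
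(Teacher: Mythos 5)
Your proposal is correct and follows essentially the same route as the paper: reduce by noetherianity to showing a three-generated left ideal is two-generated, apply Lemma 5.1 with $r=0$ to produce $q_0\in K[[x_1,\ldots,x_n]]\setminus\{0\}$, and invert $q_0$ in $E_n$. You in fact supply some details the paper leaves implicit (clearing denominators so that $a,b,c\in D_n$ before invoking Lemma 5.1, and the adjoint anti-automorphism for right ideals), but the argument is the same.
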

\begin{proof}  The ring $E_n =K((x_1,...,x_n))\langle\partial_1,...,\partial_n \rangle$ is a noetherian ring. Therefore, it is enough to show that given
$a,b,c \in E_n$ there exists $d,e \in E_n$ such that $c \in E_n(a+dc) + E_n(b+ec)$ .

Take $n=0$ in the previous lemma. Then, there exists  $q_0 \in K[[x_1,..., x_n]] ,$ $ q_0 \neq 0$ and $d,e \in D_n$ such that $q_0c \in D_n(a+dc) + D_n(b+ec).$
Since \linebreak $D_n =K[[x_1,..., x_n]]\langle\partial_1,...,\partial_n \rangle$, then  $c \in E_n(a+dc) + E_n(b+ec)$.
\end{proof}


\end{document}